% This is samplepaper.tex, a sample chapter demonstrating the
% LLNCS macro package for Springer Computer Science proceedings;
% Version 2.20 of 2017/10/04
%

\RequirePackage{amsmath}
\RequirePackage{amsfonts,verbatim,mathdots}
\RequirePackage{amssymb}
\RequirePackage{dsfont}

\RequirePackage{array,multirow,makecell}

\RequirePackage{amsthm}

\RequirePackage{mathrsfs}
\RequirePackage{stmaryrd}

\documentclass[runningheads]{llncs}
\usepackage{rotating}
\usepackage{ifthen}
\usepackage{booktabs}
\usepackage{xcolor}
\usepackage{subcaption}
\captionsetup{compatibility=false}
\usepackage{graphicx}
% Used for displaying a sample figure. If possible, figure files should
% be included in EPS format.
%
% If you use the hyperref package, please uncomment the following line
% to display URLs in blue roman font according to Springer's eBook style:
% \renewcommand\UrlFont{\color{blue}\rmfamily}

\makeatletter

\newcommand{\Rmnum}[1]{\expandafter\@slowromancap\romannumeral #1@} % roman numerals : I, II, ...
\makeatother

%%%%%%%%%%%%%%%%%%%%%%%%%%%%%%%%%%%%%%%%%%%%%%%%%%%%%%%%%%%%%%%%%%%%%%%%%%%%%%%%%%%%%%%%%%%%
%%%%%%%%%%%%%%%%%%%%%%%%%%%%%%%% FOR ADDING COLORED COMMENTS %%%%%%%%%%%%%%%%%%%%%%%%%%%%%%%
%%%%%%%%%%%%%%%%%%%%%%%%%%%%%%%%%%%%%%%%%%%%%%%%%%%%%%%%%%%%%%%%%%%%%%%%%%%%%%%%%%%%%%%%%%%%
%------- Tool to display colored notes and changes in the draft and remove them easily
% usage: cp-paste these two lines into the preamble of your document
% \input{markupdraft.tex}  % or cp-paste the below into the preamble
% \renewcommand{\markupdraft}[2]{}  % remove all todo's, notes and coloring of changes
                                    % like for the final version
% The available commands are \del, \new, \nnew, \note, \nnote, \todo (same as \TODO).
% All commands take as optional argument the author.  The nn versions give more pronounced colors. 
% Example:
%   This is\del{ an experimental version of}\note[Niko]{remark the space within the del command} 
%   a \new{simple to use} LaTeX markup.\todo{improve this example} 
%------- Meta-command to markup notes and changes and *remove those markups easily*
\newcommand{\markupdraft}[2]{% {#1: {color|display} command}{#2: desired color or text}
%  the next lines can be incommented, if respectively certain notes or coloring should disappear
    \ifthenelse{\equal{#1}{display}}{#2}{}%                 % display only in draft version
    \ifthenelse{\equal{#1}{color}}{\color{#2}}{}%           % colored only in draft (for \new command)
}
% ------ helper commands using \markupdraft, author is optional argument  

\newcommand{\newcolored}[3][]{{\markupdraft{color}{#2}#3}%  % kept in the final print
    \ifthenelse{\equal{#1}{}}{}{\markupdraft{display}{{\color{yellow!70!black}[#1]}}}} 

% ------ end user markupdraft-commands, optional argument is the author: 
\newcommand{\del}[2][]{{\markupdraft{display}{{\color{orange}[removed: "#2"[#1]]}}}} % (to be) removed
%  % kept in the final print
%  % kept in the final print
% \newcommand{\note}[2][]{\notecolored[#1]{green}{#2}}    
   % note with Anne's preferred default color
  % don't keep in current final print, but for the future 
 % (to be) removed

%%%%%%%%%%%%%%%%%%%%%%%%%%%%%%%%%%%%%%%%%%%%%%%%%%%%%%%%%%%%%%%%%%%%%%%%%%%%%%%%%%%%%%%%%%%%
\newcommand{\indraftonly}[1]{{#1}}  % draft mode, outcomment below renewcommand for final version
%%%%%%%%%%%%%%%%%%%%%%% SET FINAL MODE HERE %%%%%%%%%%%%%%%%%%%%%%%
\renewcommand{\indraftonly}[1]{}\renewcommand{\markupdraft}[2]{}  % final mode text command
%%%%%%%%%%%%%%%%%%%%%%%%%%%%%%%%%%%%%%%%%%%%%%%%%%%%%%%%%%%%%%%%%%%%%%%%%%%%%%%%%%%%%%%%%%%%
\renewcommand{\del}[2][]{}

\newcommand{\cheikh}[1]{\indraftonly{\bf {\color{purple}Cheikh: #1}}}

% prevent single-page figures and similarly stupid global formatting:
          % max fraction of floats at top
     % max fraction of floats at bottom
    %   Parameters for TEXT pages (not float pages):
    \setcounter{topnumber}{3}
    \setcounter{bottomnumber}{3}
    \setcounter{totalnumber}{3}     % 2 may work better
    \setcounter{dbltopnumber}{4}    % for 2-column pages
       % fit big float above 2-col. text
       % allow minimal text w. figs
    %   Parameters for FLOAT pages (not text pages):
         % require fuller float pages
        % N.B.: floatpagefraction MUST be less than topfraction !!
       % require fuller float pages

\newcommand{\ba}{\begin{eqnarray}}
\newcommand{\ea}{\end{eqnarray}}
\newcommand{\baStar}{\begin{eqnarray*}}
\newcommand{\eaStar}{\end{eqnarray*}}

\newcommand{\scal}[2]{\left<#1\,,\,#2\right>}

%Function

     \newcommand{\foncfast}[4]{#1 \ni #2 \longmapsto #3 \in #4}
  \newcommand{\fonccourte}[2]{#1 \longmapsto #2}

    %Norm
    \newcommand{\sbar}{\rule[-1pt]{0.4pt}{9pt}}   % sbar means s(mall) bar
\newcommand{\dsbar}{\sbar\,\sbar}       % dsbar signifie d(ouble) s(mall) bar
    \newcommand{\norm}[1]{\,\dsbar\,#1\,\dsbar\,}

\newcommand{\dsp}{\displaystyle}
 
\newcommand{\pare}[1]{\left(#1\right)}
\newcommand{\croc}[1]{\left[#1\right]}

\DeclareMathOperator{\mydef}{def}

\newcommand{\egaldef}{\stackrel{{\scriptscriptstyle{\mydef}}}{=}}

\def\R{{\mathbb R}}
\def\diff{{\mathrm d}}

\DeclareMathOperator{\Img}{Im}

\begin{document}
\title{On Bi-Objective convex-quadratic problems}
%
%\titlerunning{Abbreviated paper title}
% If the paper title is too long for the running head, you can set
% an abbreviated paper title here
%

%\author{Anne Auger\inst{1,2}\orcidID{0000} \and

\author{Cheikh Toure\inst{1} \and
Anne Auger\inst{1} \and
Dimo Brockhoff\inst{1} \and
Nikolaus Hansen\inst{1}}
\authorrunning{C. Toure et al.}
% First names are abbreviated in the running head.
% If there are more than two authors, 'et al.' is used.
%
\institute{Inria and CMAP, Ecole Polytechnique, France\\firstname.lastname@inria.fr\\cheikh.toure@polytechnique.edu}
%\email{lncs@springer.com}\\
%\url{http://www.springer.com/gp/computer-science/lncs} \and
%ABC Institute, Rupert-Karls-University Heidelberg, Heidelberg, Germany\\
%\email{\{abc,lncs\}@uni-heidelberg.de}}
%
\maketitle              % typeset the header of the contribution
\begin{abstract}
In this paper we analyze theoretical properties of bi-objective convex-quadratic problems. We give a complete description of their Pareto set and prove the convexity of their Pareto front. We show that the Pareto set is a line segment when both Hessian matrices are proportional.

We then propose a novel set of convex-quadratic test problems, describe their theoretical properties and the algorithm abilities required by those test problems. This includes in particular testing the sensitivity with respect to separability, ill-conditioned problems, rotational invariance, and whether the Pareto set is aligned with the coordinate axis.

\end{abstract}

\keywords{Bi-objective optimization  \and Pareto set \and Convex front \and convex-quadratic problems.}
\section{Introduction}
Convex-quadratic functions are among the simplest yet very useful test functions in optimization. Given a positive definite matrix $Q$ of $\mathbb{R}^{n \times n}$, a convex quadratic function is defined as 
$$
f(x) = \frac12 (x - x^*)^\top Q (x-x^*) \enspace
$$
where $x^*$ is the unique optimum of the function. The Hessian of $f$ coincides with the matrix $Q$. The level-sets of $f$ defined as
$
\{ x \in \R^n : (x - x^*)^\top Q (x-x^*) = c , c \geq 0 \}
$
are hyper-ellipsoids whose main axes are the eigenvectors of the matrix $Q$ with length proportional to the inverse of the eigenvalues of $Q$. 

By changing the eigenvalues and eigenvectors of $Q$, one can model different essential difficulties in numerical optimization: if the eigenvectors are not aligned with the coordinate axes (if the matrix $Q$ is not diagonal), then the associated function is non-separable: it cannot be efficiently optimized by coordinate-wise search. In practice, difficult optimization problems are non-separable. Having a large condition number for $Q$, that is a large ratio between the largest and smallest eigenvalue of $Q$ models ill-conditioned problems where the characteristic scale along different directions is very different. Ill-conditioning is very frequent in real-world problems. They arise naturally as one often optimizes quantities that have different natures and different intrinsic scales (some variables can be akin to time, others to weights, ...) such that a unit change along each variable can have a completely different impact on the function optimized. More generally, the eigenspectrum of $Q$ entirely characterizes the scale among the different axes of the hyper-ellipsoidal level sets and parametrizes the difficulty of the function: from the arguably easiest function, the sphere function $f(x) = \sum_{i=1}^n x_i^2$, to very difficult ill-conditioned functions where condition numbers of $Q$ of up to $10^{10}$ have been observed in real-world problems, for example in \cite{cdhq2010a}.

Convex-quadratic functions have been central to the design of several important classes of optimization algorithms for single-objective optimization. Newton or quasi-Newton methods use or learn a second order approximation of the objective function optimized \cite{nocedal2006numerical}. This second order approximation is done by convex-quadratic functions (assuming that the function is twice continuously differentiable and convex). Introduced more recently, the class of derivative-free-optimization (DFO) trust-region based algorithms builds a second-order approximation of the objective function by interpolation \cite{newuoaReport:2004}. In the evolutionary computation (EC) context, convex-quadratic functions have also played a central role for the design of algorithms like CMA-ES: they have been intensively used for designing the algorithm and the performance of the method has been carefully quantified on different eigenspectra of the matrix $Q$ for different condition numbers \cite{hansen2001}.

Given that a multiobjective problem is ``simply" the simultaneous optimization of single-objective problems, the typical difficulties of each objective function are the same as the typical difficulties of single-objective problems. In particular non-separability and ill-conditioning are important difficulties that the single functions have. Therefore, combining convex-quadratic problems seems natural for testing and designing multiobjective algorithms. This has already been done in the past for instance for the design of multiobjective versions of CMA-ES \cite{ihr2007a} or as a subset of the biobjective BBOB test function suite \cite{bth2015a,tbha2016a}.

Yet, while the difficulties encoded and parametrized within a convex-quadratic problem are well-understood for single-objective optimization, the situation is different for multiobjective optimization, starting from bi-objective optimization. Simple properties like convexity of the Pareto front  associated to bi-objective convex-quadratic problems as well as properties of the Pareto set have not been systematically investigated. Additionally, convex-quadratic bi-objective test problems used in the literature do not capture all important properties one could be testing with convex-quadratic problems. There is  more degree of freedom than for single objective optimization that is not exploited: we can combine two functions having the same Hessian matrix, place the optima on the functions both on one axis of the search space, ... and this will affect how the Pareto set and Pareto front look like.

This paper aims at filling the gaps from the literature on multiobjective optimization with respect to convex-quadratic problems. More precisely the objectives are twofold:
clarify theoretical Pareto properties of bi-objective problems where each function is convex-quadratic and define sets of bi-objective convex-quadratic problems that allow to test different (well-understood) difficulties of bi-objective problems. The paper is organized as follows: in Section~\ref{paretosection} we present theoretical properties of convex-quadratic problems and discuss new test functions in Section~\ref{classsection}.

\section{Theoretical Properties of Bi-Objective Convex-Quadratic Problems}
\label{paretosection}
\subsection{Preliminaries}

We consider bi-objective problems $(f_1,f_2)$ defined on the search space $\R^n$.

The Pareto set of $(f_1,f_2)$ is defined as the set of all non-dominated (or efficient) solutions $\{ x\in\R^n \,|\, \not\!\!\exists y\in\R^n \text{ such that } f_1(y)\leq f_1(x) \text{ and } f_2(y)\leq f_2(x) \text{ and at least one inequality is strict} \}$. The image of the Pareto set (in the objective space $\R^2$) is called the Pareto front of $(f_1,f_2)$. We first remark that the Pareto set remains unchanged if we compose the objective functions with a strictly increasing function. More precisely the following lemma holds.
\begin{lemma}[Invariance of the Pareto set to strictly increasing transformations of the objectives]
Given a bi-objective problem $x \mapsto (f_1(x), f_2(x))$ and $g_{1}: \fonccourte{\Img({f}_{1})}{\R}$, $g_{2}: \fonccourte{\Img({f}_{2})}{\R}$ two strictly increasing functions, then $({f}_{1},{f}_{2})$ and $(g_{1}\circ {f}_{1},g_{2}\circ {f}_{2})$ have the same Pareto set.  
\label{lemmaInvarianceIncreasing}
\end{lemma}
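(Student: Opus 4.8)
The plan is to prove this by a direct set-theoretic argument showing that the defining condition for being in the Pareto set is literally preserved under the transformation, because a strictly increasing function preserves and reflects (non-strict and strict) inequalities. First I would unfold the definition: recall that $x$ is in the Pareto set of $(f_1,f_2)$ exactly when there is no $y$ with $f_1(y)\le f_1(x)$, $f_2(y)\le f_2(x)$, and at least one inequality strict. To prove the two Pareto sets coincide it suffices to show that a point $x$ is dominated with respect to $(f_1,f_2)$ if and only if it is dominated with respect to $(g_1\circ f_1, g_2\circ f_2)$; taking complements then gives equality of the Pareto sets.

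\medskip

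The key algebraic fact I would isolate first is the following elementary lemma about a single strictly increasing $g$: for any reals $a,b$ in the domain, $a\le b \iff g(a)\le g(b)$ and $a<b \iff g(a)<g(b)$. The forward direction of the non-strict inequality is the definition of (weakly) increasing, and strict monotonicity gives $a<b\Rightarrow g(a)<g(b)$; the converses follow by contraposition using a trichotomy argument (if $a>b$ then $g(a)>g(b)$, contradicting $g(a)\le g(b)$, etc.). I would state this for $g_1$ applied to the pair $(f_1(y),f_1(x))$ and for $g_2$ applied to $(f_2(y),f_2(x))$, noting that $f_i(x), f_i(y)$ indeed lie in $\Img(f_i)$, the domain of $g_i$, so the composition is well defined.

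\medskip

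With this equivalence in hand the main step is a chain of iff's. Fix $x$ and $y$. The condition ``$f_1(y)\le f_1(x)$ and $f_2(y)\le f_2(x)$ and at least one strict'' is equivalent, applying the monotonicity equivalences coordinatewise, to ``$g_1(f_1(y))\le g_1(f_1(x))$ and $g_2(f_2(y))\le g_2(f_2(x))$ and at least one strict.'' The only mildly delicate point — which I expect to be the main (though still minor) obstacle — is handling the ``at least one inequality is strict'' disjunction: one must check that strictness in coordinate $i$ for the $f_i$ is equivalent to strictness in coordinate $i$ for the $g_i\circ f_i$, which is exactly the strict part of the monotonicity equivalence, and that the non-strict inequality in the other coordinate is likewise preserved, so that the combined ``weak domination plus one strict'' clause transfers faithfully. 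Since this holds for every $y$, $x$ is dominated in $(f_1,f_2)$ iff it is dominated in $(g_1\circ f_1,g_2\circ f_2)$, hence the sets of non-dominated points agree. I would close by remarking that the argument uses no structure of $\R^n$ or of convex-quadratic functions and applies verbatim to any two objectives.
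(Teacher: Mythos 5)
Your proposal is correct and follows essentially the same route as the paper's proof: both show that the domination condition is preserved in each direction because strictly increasing functions preserve and reflect weak and strict inequalities, and then pass to complements to conclude the Pareto sets coincide. Your version simply spells out the monotonicity equivalences more explicitly than the paper does.
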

\begin{proof}
If $x$ is not in the Pareto set of $(g_{1}\circ {f}_{1},g_{2}\circ {f}_{2})$, then their exists $y$ such that $g_{1}\circ {f}_{1}(y) \leq g_{1}\circ {f}_{1}(x)$ and $g_{2}\circ {f}_{2}(y) \leq g_{2}\circ {f}_{2}(x)$ with one inequality being strict, which is equivalent to the fact that  ${f}_{1}(y) \leq {f}_{1}(x)$ and $f_{2}(y) \leq {f}_{2}(x)$, with one inequality being strict. And vice versa. Hence $x$ is not in the Pareto set of  $(g_{1}\circ {f}_{1},g_{2}\circ {f}_{2})$ if and only if it is not in the Pareto set of $(f_1,f_2)$, which shows that both problems have the same Pareto set.
\qed
\end{proof}
From now on $(f_1,f_2)$ denote a bi-objective convex-quadratic problem. More precisely, let $x_{1}$, $x_{2}$ be two \emph{different} vectors in $\R^{n}$, and $\alpha,\beta > 0$. Let  $Q_{1}$ and $Q_{2}$ (in $\R^{n^2}$) be two positive definite matrices and  consider the bi-objective \emph{minimization} problem $ \pare{f_{1},f_{2}}$ defined for $x \in \R^n$ as
\begin{align}
f_1 (x) =  \frac{1}{\alpha} \pare{x-x_{1}}^\top Q_{1} \pare{x-x_{1}},
f_2 (x) =  \frac{1}{\beta} \pare{x-x_{2}}^\top Q_{2} \pare{x-x_{2}}.
\end{align}
We denote this general bi-objective convex-quadratic problem by $\mathcal{P}$, and assume that the optimization goal is to find (an approximation of) the Pareto set of $\mathcal{P}$.
\subsection{Pareto set}
We characterize in this section the Pareto set of $\mathcal{P}$.
We use the linear scalarization method to obtain the whole Pareto set. This is doable, whenever $f_{1}$ and $f_{2}$ are strict convex functions (see~\cite{jahnvector}). Then the Pareto set of $\mathcal{P}$ is described by the solutions of 
$$\dsp\min_{x\in\R^{n}} \, \pare{1-t} f_{1}(x)+t f_{2}(x) \text{ , for }  t \in \croc{0,1}. $$

We prove in the next proposition that the Pareto set of $\mathcal{P}$ is a continuous and differentiable  parametric curve of $\R^n$ whose extremes are $x_1$ and $x_2$.
\begin{proposition}
 The Pareto set of $\mathcal{P}$ is the image of the function $\varphi$ defined as 
 \begin{equation}
 \varphi: t \in [0,1] \mapsto \croc{ (1-t) Q_{1} + t Q_{2} }^{-1} \croc{ (1-t)Q_{1}x_{1}+t Q_{2}x_{2} }   \enspace.
 \end{equation}
The function $\varphi$ is differentiable and verifies for any $t$ in $ \croc{0,1} $
\begin{align}\label{eq:ParetoSet}
\dsp (1-t)Q_{1}\pare{ \varphi(t)-x_{1} } & = tQ_{2}\pare{ x_{2} - \varphi(t) }, \\ \label{derivephi}
 t\croc{ (1-t) Q_{1} + t Q_{2} } \varphi^{\prime}(t) & = Q_{1}\pare{ \varphi(t)-x_{1} } .
\end{align}
%\ba \dsp (1-t)Q_{1}\pare{ \varphi(t)-x_{1} } = tQ_{2}\pare{ x_{2} - \varphi(t) }, \text{ for any $t \in \croc{0,1}$}. \label{eq:ParetoSet}\ea
%% The function $\varphi$ is \textbf{differentiable} and verifies:
%\ba t\croc{ (1-t) Q_{1} + t Q_{2} } \varphi^{\prime}(t) = Q_{1}\pare{ \varphi(t)-x_{1} }, \text{ for any $t \in \croc{0,1}$}. \label{derivephi}\ea
Hence, the Pareto set is a continuous (differentiable) curve of $\R^n$ whose extremes are $x_1=\varphi(0)$ and $x_2=\varphi(1)$.
\label{paretoset}
\end{proposition}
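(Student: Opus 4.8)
The plan is to invoke the linear-scalarization characterization recalled just before the statement and to compute the scalarized minimizer in closed form. First I would use Lemma~\ref{lemmaInvarianceIncreasing} to normalize the scaling constants: since $\alpha,\beta>0$, the maps $u\mapsto\alpha u$ and $u\mapsto\beta u$ are strictly increasing on $\Img(f_1)$ and $\Img(f_2)$, so $(f_1,f_2)$ and $(\alpha f_1,\beta f_2)$ share the same Pareto set. This lets me work with $\tilde f_1(x)=(x-x_1)^\top Q_1(x-x_1)$ and $\tilde f_2(x)=(x-x_2)^\top Q_2(x-x_2)$, which is exactly what makes $\alpha,\beta$ drop out of the stated $\varphi$.

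Next, for $t\in[0,1]$ I set $g_t(x)=(1-t)\tilde f_1(x)+t\tilde f_2(x)$. The matrix $M(t)\egaldef(1-t)Q_1+tQ_2$ is a convex combination of positive definite matrices, hence positive definite and invertible for every $t\in[0,1]$; thus $g_t$ is strictly convex and has a unique minimizer, found from $\nabla g_t(x)=0$. Writing the gradient as $2\croc{(1-t)Q_1(x-x_1)+tQ_2(x-x_2)}$ and setting it to zero gives $M(t)x=(1-t)Q_1x_1+tQ_2x_2$, i.e. $x=\varphi(t)$. By the scalarization result cited from \cite{jahnvector}, the Pareto set is then exactly $\{\varphi(t):t\in[0,1]\}$, the image of $\varphi$, and the same first-order condition, merely rearranged, is precisely \eqref{eq:ParetoSet}.

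For differentiability I would note that $M(t)$ and $b(t)\egaldef(1-t)Q_1x_1+tQ_2x_2$ are affine in $t$ while $M(t)$ stays invertible, so $\varphi=M^{-1}b$ is differentiable on $[0,1]$ as a composition with the smooth matrix-inversion map. To obtain \eqref{derivephi} I would differentiate the identity $M(t)\varphi(t)=b(t)$, yielding $(Q_2-Q_1)\varphi(t)+M(t)\varphi'(t)=Q_2x_2-Q_1x_1$, and regroup the right-hand side as $M(t)\varphi'(t)=Q_1(\varphi(t)-x_1)+Q_2(x_2-\varphi(t))$. Substituting \eqref{eq:ParetoSet} in the form $Q_2(x_2-\varphi(t))=\tfrac{1-t}{t}Q_1(\varphi(t)-x_1)$ (valid for $t\neq0$) collapses the two terms into $\tfrac{1}{t}Q_1(\varphi(t)-x_1)$, which is \eqref{derivephi}. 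Continuity then follows from differentiability, and evaluating at the endpoints gives $\varphi(0)=Q_1^{-1}Q_1x_1=x_1$ and $\varphi(1)=Q_2^{-1}Q_2x_2=x_2$, identifying the extremes.

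The one point genuinely requiring care is that linear scalarization must recover the \emph{full} Pareto set and not merely a subset: this is where strict convexity of $f_1,f_2$ (hence of every $g_t$) enters, and it is precisely the content of the result quoted from \cite{jahnvector} above the proposition, so I would rely on it rather than reprove it. A lesser bookkeeping subtlety is the normalization step: without first absorbing $\alpha,\beta$ through Lemma~\ref{lemmaInvarianceIncreasing}, the scalarization would produce a reparametrized curve with the same image but not the stated closed form for $\varphi$.
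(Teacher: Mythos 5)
Your proof is correct and follows essentially the same route as the paper: linear scalarization of the strictly convex objectives, explicit solution of the first-order condition to obtain $\varphi$, and differentiation of the identity $\croc{(1-t)Q_1+tQ_2}\varphi(t)=(1-t)Q_1x_1+tQ_2x_2$ to derive \eqref{derivephi}. The only differences are cosmetic: you absorb $\alpha,\beta$ up front via Lemma~\ref{lemmaInvarianceIncreasing}, whereas the paper carries them along and reparametrizes the scalarization weight through the bijection $s\mapsto s\alpha/\pare{(1-s)\beta+s\alpha}$; and your derivation of \eqref{derivephi} divides by $t$ and so formally covers only $t\in(0,1]$, but the identity at $t=0$ is immediate since $\varphi(0)=x_1$ makes both sides vanish.
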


\begin{proof}
For any $s$ in $\croc{0,1}$, define $ g_{s} \egaldef (1-s)f_{1}+s f_{2}.$ We observe that $g_{s}$, like $f_{1}$ and $f_{2}$, is strictly convex, differentiable, and diverges to $\infty$ when $\|x \|$ goes to $\infty$ (where $\| x \|$ denotes the Euclidean norm). Then its critical point minimizes $g_{s}.$
 Let us now compute the gradient of $g_{s}$ times $\alpha \beta$ for $x$ in $\R^{n}$:
 $$ \dsp\alpha\beta \nabla g_{s}(x) = \dsp (1-s)\alpha\beta\, \nabla f_{1}(x) + s\alpha\beta\, \nabla f_{2}(x)
    = \dsp 2(1-s)\beta\, Q_{1} (x-x_{1}) + 2s\alpha\, Q_{2}(x - x_{2})$$
     $$\text{Thus, }\dsp\alpha\beta \nabla g_{s}(x) = \dsp 2 \croc{ (1-s)\beta\, Q_{1} + s\alpha\, Q_{2} } x - 2(1-s)\beta\, Q_{1}x_{1} - 2s\alpha\, Q_{2} x_{2}.$$
Then it follows that for any $s$ in $\croc{0,1}$, the point that minimizes $g_{s}$ (its critical point), denoted by $\tilde{x}_{s}$ verifies
$\dsp  \frac{ (1-s)\beta\, Q_{1} + s\alpha\, Q_{2} }{(1-s)\beta+s\alpha} \,\tilde{x}_{s} = \frac{(1-s)\beta\, Q_{1}x_{1} +s\alpha\, Q_{2} x_{2}}{(1-s)\beta+s\alpha}$.
Since $\dsp\foncfast{ \croc{0,1} }{s}{ \frac{s\alpha}{(1-s)\beta+s\alpha} }{ \croc{0,1} }$ is bijective (its derivative is $\dsp\fonccourte{s}{\frac{\alpha\beta}{\pare{(1-s)\beta+s\alpha}^{2}}}$), then it is equivalent to parametrize the Pareto set with $\dsp t \egaldef  \frac{s\alpha}{(1-s)\beta+s\alpha}$. Hence, the Pareto set is fully described by $\pare{\varphi(t)}_{t\in\croc{0,1}}$ such that:

\ba
 \croc{ (1-t) Q_{1} + t Q_{2} } \varphi(t) &=& (1-t) Q_{1}x_{1} + t Q_{2} x_{2}, \label{phirelation2}\\
 (1-t)Q_{1}\pare{ \varphi(t)-x_{1} } &=& tQ_{2}\pare{ x_{2} - \varphi(t) } \label{phirelation1}.
\ea
The function $t \to \croc{ (1-t) Q_{1} + t Q_{2} }^{-1}$ is differentiable as inverse of a differentiable and invertible matrix function. Then $\varphi$ is differentiable. \\We differentiate~\eqref{phirelation2} and multiply by $t$ to obtain
$ t \croc{ (1-t) Q_{1} + t Q_{2} } \varphi^{\prime}(t) = tQ_{2}x_{2}-tQ_{1}x_{1} + t Q_{1}\varphi(t) - t Q_{2}\varphi(t) .$
Injecting in \eqref{phirelation1} gives
$ t\croc{ (1-t) Q_{1} + t Q_{2} } \varphi^{\prime}(t) = Q_{1}\pare{ \varphi(t)-x_{1} }, \text{ for any $t \in \croc{0,1}$}. $\qed
\end{proof}

We obtain as corollary that when $f_{1}$ and $f_{2}$ have proportional Hessian matrices, then the Pareto set is the line segment between the optima of the functions $f_1$ and $f_2$. 
\begin{corollary}
In the case where $f_{1}$ and $f_{2}$ have proportional Hessian matrices, the Pareto set of $\mathcal{P}$ is the line segment between $x_{1}$ and $x_{2}$.
\label{segmentset}
\end{corollary}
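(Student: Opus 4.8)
The plan is to exploit directly the explicit formula for $\varphi$ established in Proposition~\ref{paretoset}. Saying that $f_1$ and $f_2$ have proportional Hessian matrices means that their Hessians $\tfrac{2}{\alpha}Q_1$ and $\tfrac{2}{\beta}Q_2$ differ by a scalar factor; since $Q_1$ and $Q_2$ are positive definite and $\alpha,\beta>0$, that factor is positive, so we may write $Q_2=\lambda Q_1$ for some $\lambda>0$. I would record this reduction first, as it is the only place where the positive-definiteness hypothesis is genuinely used.

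Next I would substitute $Q_2=\lambda Q_1$ into the definition of $\varphi$. Both the matrix $(1-t)Q_1+tQ_2$ and the vector $(1-t)Q_1x_1+tQ_2x_2$ factor through $Q_1$: the former becomes $\bigl((1-t)+t\lambda\bigr)Q_1$ and the latter becomes $Q_1\bigl((1-t)x_1+t\lambda x_2\bigr)$. The inverse of a scalar multiple of $Q_1$ is that reciprocal scalar times $Q_1^{-1}$, so the factor $Q_1^{-1}Q_1$ cancels and one is left with
\begin{equation*}
\varphi(t) = \frac{(1-t)x_1 + t\lambda x_2}{(1-t)+t\lambda}\,,
\end{equation*}
which is manifestly an affine combination of $x_1$ and $x_2$ lying on the line through these two points. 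This computation is routine and I expect no difficulty here.

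The remaining, and only slightly delicate, step is to check that as $t$ ranges over $[0,1]$ the point $\varphi(t)$ sweeps out exactly the segment $[x_1,x_2]$, neither more nor less. Writing $\varphi(t)=(1-\mu(t))x_1+\mu(t)x_2$ with weight $\mu(t)=\tfrac{t\lambda}{(1-t)+t\lambda}$, I would verify that $\mu$ is a continuous strictly increasing bijection from $[0,1]$ onto $[0,1]$: its endpoints are $\mu(0)=0$ and $\mu(1)=1$ (consistent with $\varphi(0)=x_1$ and $\varphi(1)=x_2$ from the Proposition), and a one-line derivative computation gives $\mu'(t)=\lambda/\bigl((1-t)+t\lambda\bigr)^2>0$. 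Hence $\mu$ is onto $[0,1]$, so the image of $\varphi$ is precisely $\{(1-\mu)x_1+\mu x_2 : \mu\in[0,1]\}$, the line segment between $x_1$ and $x_2$. The main obstacle, if any, is purely bookkeeping: ensuring the weight $\mu$ covers the whole unit interval, so that the conclusion is an equality of sets rather than a mere inclusion.
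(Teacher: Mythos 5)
Your proposal is correct and follows essentially the same route as the paper: substitute the proportionality relation into the explicit formula for $\varphi$ from Proposition~\ref{paretoset}, cancel the common matrix factor, and observe that the resulting scalar weight is a bijection of $[0,1]$ onto itself. You are slightly more explicit than the paper in justifying the positivity of the proportionality constant and in computing the derivative of the weight function, but the argument is the same.
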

\begin{proof}
In that case, their exists a real $\gamma$ such that $\dsp\frac{Q_{1}}{\alpha}=\dsp\gamma\frac{Q_{2}}{\beta}.$ Then, Proposition~\ref{paretoset} implies that for any $t \in \croc{0, 1}$,
$$ \varphi(t) = \croc{ (1-t) \gamma\frac{\alpha}{\beta}Q_{2} + t Q_{2} }^{-1} \croc{ (1-t)\gamma\frac{\alpha}{\beta}Q_{2} x_{1}+t Q_{2} x_{2} } = \frac{\gamma\alpha(1-t)x_{1} +t\beta x_{2} }{(1-t)\alpha\gamma + t\beta},
$$
which is $\croc{x_{1},x_{2}}$, since $\foncfast{ \croc{0,1} }{t}{ \frac{t\beta }{(1-t)\alpha\gamma + t\beta}
  }{ \croc{0,1} }$ is a bijection. 
\label{paretosetSamequadratics}\qed
\end{proof}
Using Lemma~\ref{lemmaInvarianceIncreasing}, we directly deduce the following corollary.
\begin{corollary}
If $f_{1}$ and $f_{2}$ have proportional Hessian matrices, $g_{1}: \fonccourte{\Img(f_{1})}{\R}$, $g_{2}: \fonccourte{\Img(f_{2})}{\R}$ are two strictly increasing functions, then the Pareto set of the problem $\pare{g_{1}\circ f_{1}, g_{2}\circ f_{2} }$ is the line segment between $x_{1}$ and $x_{2}$.
\label{increasingTransformation}
\end{corollary}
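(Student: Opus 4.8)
The plan is to obtain this corollary as an immediate consequence of the two preceding results, by chaining Corollary~\ref{segmentset} with Lemma~\ref{lemmaInvarianceIncreasing}. No new computation is required: the whole content is a sequence of equalities between Pareto sets, so I would present it as a short deduction rather than a fresh argument.

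First I would invoke Corollary~\ref{segmentset}. Since $f_{1}$ and $f_{2}$ have proportional Hessian matrices, the Pareto set of the underlying convex-quadratic problem $\mathcal{P}=\pare{f_{1},f_{2}}$ is exactly the line segment between $x_{1}$ and $x_{2}$. This is the substantive geometric fact, and it is already established. Next I would apply Lemma~\ref{lemmaInvarianceIncreasing} to the given strictly increasing transformations $g_{1}$ and $g_{2}$: the lemma guarantees that composing each objective with a strictly increasing function leaves the Pareto set unchanged, so the Pareto set of $\pare{g_{1}\circ f_{1}, g_{2}\circ f_{2}}$ coincides with the Pareto set of $\pare{f_{1},f_{2}}$. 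Combining the two equalities then gives that the Pareto set of $\pare{g_{1}\circ f_{1}, g_{2}\circ f_{2}}$ is the line segment between $x_{1}$ and $x_{2}$, which is the claim.

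There is essentially no obstacle to overcome here; the only points deserving a line of care are the hypotheses ensuring the two cited results apply. On the one hand, the proportionality of the Hessians is exactly what triggers Corollary~\ref{segmentset}. On the other hand, $g_{1}$ and $g_{2}$ are assumed to be defined on $\Img(f_{1})$ and $\Img(f_{2})$ respectively, which is precisely the domain requirement of Lemma~\ref{lemmaInvarianceIncreasing}, so the compositions $g_{1}\circ f_{1}$ and $g_{2}\circ f_{2}$ are well defined and the invariance lemma applies verbatim. I would therefore keep the proof to two or three sentences, simply referencing the two results and noting that their hypotheses are met.
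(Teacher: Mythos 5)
Your proposal is correct and matches the paper exactly: the paper states the corollary as a direct consequence of Lemma~\ref{lemmaInvarianceIncreasing} applied to the conclusion of Corollary~\ref{segmentset}, which is precisely the two-step chaining you describe. Nothing is missing.
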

As an example, the double-norm problem defined as: \\$\pare{ x\to\norm{x-x_{1}}_{2}, x\to\norm{x-x_{2}}_{2}}$ can be seen as: $(g\circ f_{1}, g\circ f_{2})$ where $g(x) = \sqrt{x}$, $f_{1}(x) = \norm{x-x_{1}}_{2}^{2} $ and $f_{2}(x) = \norm{x-x_{2}}_{2}^{2}$.\\
Then $( g\circ f_{1}, g\circ f_{2} )$ has the same Pareto set than the double-sphere problem $(f_{1}, f_{2})$, which is the line segment between $x_{1}$ and $x_{2}$. Therefore the Pareto front of the double-norm problem is described by $\pare{ t \norm{x_{2}-x_{1}}_{2}, (1-t) \norm{x_{2}-x_{1}}_{2}  }_{t\in\croc{0,1}} $. Thereby, the front is described by the function $\fonccourte{u}{\norm{x_{2}-x_{1}}_{2}-u.}$ We recover the well-known result that the double-norm problem has a linear front.

Corollary~\ref{increasingTransformation} allows also to recover the Pareto set description for the one-peak scenario in the Mixed-Peak Bi-Objective Problem (see \cite{kerschke2018search} and \cite{kerschke2016towards}).\footnote{In that scenario, we set $f_1 (x) =  \pare{x-c}^\top \Sigma \pare{x-c}$,
$f_2 (x) =   \pare{x-c^{\prime} }^\top \Sigma^{\prime} \pare{x-c^{\prime}}$ ($f_{1}$ and $f_{2}$
are seen as squares of the Mahalanobis distance to the optima, with respect to the Hessian matrices), $\dsp g_{1}(u) = 1-\frac{h_{1}}{ 1+\frac{\sqrt{u}}{r_{1}} }$, $\dsp g_{2}(u) = 1-\frac{h_{2}}{ 1+\frac{\sqrt{u}}{r_{2}} }$.}

In general, the Pareto set of a bi-objective convex-quadratic problem is not necessarily a line segment. Consider for instance for $n=2$ the case where $x_{1} = (0,0)^\top $, $x_{2} = (1,1)^\top $ and where we generate two different matrices $Q_1$ and $Q_2$ by randomly rotating a diagonal matrix with eigenvalues $1$ and $10$. Two resulting Pareto fronts associated to different random rotations are depicted in Figure~\ref{generalset}.

For $n=10$, we  also define $\mathcal{P}_{10}$ setting $x_{1} = (0,\dots,0)^\top $, $x_{2} = (1,\dots,1)^\top $ and $Q_{1}$ and $Q_{2}$ as diagonal matrices such that for $i=1,\ldots, 10$
 \begin{equation}\label{eq:Qdim10}
Q_{1}(i,i) = 100^{\frac{i-1}{9}}, \text{and } Q_{2}(i,i) = 10^{\frac{i-1}{9}} .
 \end{equation}
The different coordinates of the Pareto set given in \eqref{eq:ParetoSet} are depicted in Figure~\ref{generalset}.

\begin{figure*}[h!]
 %   \centering
 %   \begin{subfigure}[t]{0.50\textwidth}
  %      \centering
        \includegraphics[width=0.52\textwidth]{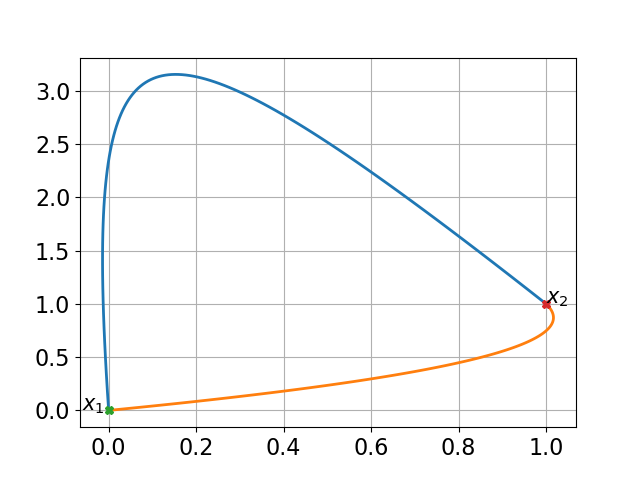}
  %      \caption{ Pareto set of $\mathcal{P}_{2}$ and $\mathcal{P}_{2}^{\prime}$ } \label{paretosetdim2}
 %   \end{subfigure}%
 %   ~ 
 %       \begin{subfigure}[t]{0.5\textwidth}
 %       \centering
 \hspace{-0.5cm}
        \includegraphics[width=0.52\textwidth]{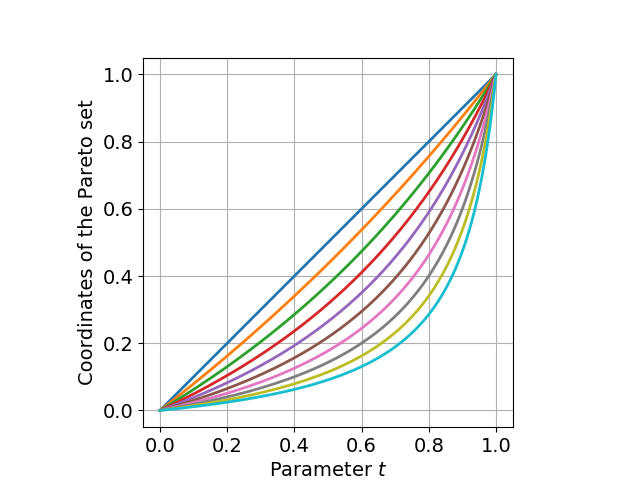}
    %    \caption{ Pareto set of $\mathcal{P}_{10}$.} \label{generalset}
 %   \end{subfigure}
    \caption{ Left: Two Pareto sets for $n=2$ represented in $\R^2$ with $Q_1$ and $Q_2$ randomly sampled and different. Right: Pareto set for $n=10$ with matrices given in \eqref{eq:Qdim10} represented as the function of the parameter $t$ given in \eqref{eq:ParetoSet}. The coordinates are ordered, the first one is on top and last one below. %\niko{Ideally, at least the right figure should be done with \texttt{set\_aspect('equal')}, and all below shown ellipsoids should be too. }
    }
    \label{generalset}
\end{figure*}

\subsection{Convexity of the Pareto front}
Corollary \ref{segmentset} proves that in the case where we have proportional Hessian matrices in  problem $\mathcal{P}$, the Pareto set is a line segment. Then it is reasonable to expect a simple analytic expression for the corresponding Pareto front. 
In what follows, we will express the Pareto front of a bi-objective problem as a one-dimensional function $u \in \R \mapsto g(u)$. Formally, if $t \in \R \mapsto \varphi(t) \in \R^n$ is a parametrization of the Pareto set, then the function $g$ satisfies $f_2(\varphi(t)) = g( f_1(\varphi(t))$. It is well-known that when $(f_1,f_2)$ is the double-sphere, that is $f_1(x) = \frac1n \sum_{i=1}^n x_i^2$ and $f_2(x) = \frac1n\sum_{i=1}^n (x_i - 1)^2$, then the Pareto front expression is given by $g(u) = (1-\sqrt{u})^2$ ~\cite{emmerich2007test}. In the next proposition, we show that this expression of the Pareto front holds (up to a normalization) for all bi-objective convex-quadratic problems, provided the Hessians of $f_1$ and $f_2$ are proportional. 
\begin{proposition}
When we have proportional Hessian matrices in the problem $\mathcal{P}$, the Pareto front is described by the following continuous and convex function:
\begin{equation}\label{paretofront}
u \in [0, \kappa_{\alpha}] \mapsto  \kappa_{\beta} \pare{ 1- \dsp\sqrt{ \frac{ u}{ \kappa_{\alpha} } } }^{2},
\text{where $\left\{\begin{array}{c} \kappa_{\alpha} = \dsp \frac{\pare{x_{2}-x_{1}}^\top Q_{1} \pare{x_{2}-x_{1}}}{\alpha}\\ \kappa_{\beta} = \dsp \frac{\pare{x_{2}-x_{1}}^\top Q_{2} \pare{x_{2}-x_{1}}}{\beta} \end{array}\right.$}
\end{equation}
\label{propHessianFront}
\end{proposition}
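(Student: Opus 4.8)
The plan is to lean on Corollary~\ref{segmentset}, which already establishes that under proportional Hessians the Pareto set is exactly the line segment $\croc{x_1,x_2}$. Since the Pareto front is by definition the image of the Pareto set under $x \mapsto \pare{f_1(x),f_2(x)}$, I am free to reparametrize the segment in whatever way is most convenient rather than sticking with the $\varphi$ of Proposition~\ref{paretoset}. So first I would introduce the linear parametrization $\psi(t) = (1-t)x_1 + t x_2$ for $t \in \croc{0,1}$, whose image is again the segment $\croc{x_1,x_2}$.

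Next I would evaluate the two objectives along this segment. Using $\psi(t) - x_1 = t(x_2 - x_1)$ and $\psi(t) - x_2 = -(1-t)(x_2 - x_1)$, direct substitution into the definitions of $f_1$ and $f_2$ gives
\begin{align*}
f_1(\psi(t)) &= t^2\,\frac{\pare{x_2-x_1}^\top Q_1\pare{x_2-x_1}}{\alpha} = t^2 \kappa_\alpha, \\
f_2(\psi(t)) &= (1-t)^2\,\frac{\pare{x_2-x_1}^\top Q_2\pare{x_2-x_1}}{\beta} = (1-t)^2\kappa_\beta.
\end{align*}
Note that this computation uses neither proportionality nor the specific form of $\varphi$; proportionality enters only through Corollary~\ref{segmentset}, which is what guarantees that this segment really is the whole Pareto set.

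Then I would eliminate the parameter $t$. Since $\kappa_\alpha>0$, the map $t \mapsto f_1(\psi(t)) = t^2\kappa_\alpha$ is a continuous increasing bijection from $\croc{0,1}$ onto $\croc{0,\kappa_\alpha}$, so along the Pareto set $f_1$ attains every value $u \in \croc{0,\kappa_\alpha}$ exactly once, with $t = \sqrt{u/\kappa_\alpha}$. Substituting this into the expression for $f_2$ yields $f_2 = g(u)$ with $g(u) = \kappa_\beta\pare{1-\sqrt{u/\kappa_\alpha}}^2$, which is precisely the front claimed in~\eqref{paretofront}.

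Finally, to establish continuity and convexity I would treat $g$ simply as a one-dimensional function. Continuity on $\croc{0,\kappa_\alpha}$ is immediate. For convexity I would differentiate twice: a short computation gives $g''(u) = \frac{\kappa_\beta}{2\kappa_\alpha^2}\pare{u/\kappa_\alpha}^{-3/2}$, which is strictly positive on $(0,\kappa_\alpha)$ since $\kappa_\alpha,\kappa_\beta>0$, so $g$ is (strictly) convex. I do not anticipate a genuine obstacle: the only points needing care are justifying that the linear parametrization is legitimate (guaranteed by Corollary~\ref{segmentset}) and that $f_1$ is strictly monotone along the segment, so that the front is a well-defined single-valued function of $u$.
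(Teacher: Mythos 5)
Your proposal is correct and follows essentially the same route as the paper: parametrize the segment (justified by Corollary~\ref{segmentset}) linearly as $(1-t)x_1+tx_2$, compute $f_1 = t^2\kappa_\alpha$ and $f_2 = (1-t)^2\kappa_\beta$, and eliminate $t$. Your explicit second-derivative check of convexity is a small addition the paper leaves implicit in the closed-form expression, but it does not change the argument.
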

\begin{proof}
Denote $u \egaldef f_{1}\circ \varphi \text{ and } v \egaldef f_{2}\circ \varphi$,
  where $\varphi:\dsp\foncfast{\croc{0,1}}{t}{(1-t) x_{1}+t x_{2}}{\croc{x_{1},x_{2}} } $ is the line segment between $x_{1}$ and $x_{2}$.\\
For any $t\in\croc{0,1}$,
$u(t) = f_{1}(\varphi(t)) =  \frac{1}{\alpha} \pare{x_{2}-x_{1}}^\top Q_{1} \pare{x_{2}-x_{1}} t^{2},
 v(t) = f_{2}(\varphi(t)) =  \frac{1}{\beta} \pare{x_{2}-x_{1}}^\top Q_{2} \pare{x_{2}-x_{1}} \pare{1-t}^{2}.
 $
 It follows that for any $t\in\croc{0,1}$:
 $$\dsp v(t) = \dsp \frac{\pare{x_{2}-x_{1}}^\top Q_{2} \pare{x_{2}-x_{1}}}{\beta} \pare{ 1- \dsp\sqrt{ \frac{\alpha u(t)}{ \pare{x_{2}-x_{1}}^\top Q_{1} \pare{x_{2}-x_{1}} } } }^{2}.\,\,\qed$$
\end{proof}
From Proposition~\ref{propHessianFront}, we deduce that if we set 
 $ \kappa_{\alpha} = \kappa_{\beta} = 1$, then the  Pareto front will be independent from the Hessian matrix and will be described by the front of the double-sphere problem: $u \mapsto  ( 1- \sqrt{u})^{2}$.

We investigate now the general case where the Hessians of the functions $f_1$ and $f_2$ are not necessarily proportional.
Yet, before digging into the general convex-quadratic problems, we show a result on the shape of the Pareto front of a larger class of bi-objective problems.
\begin{theorem}
Let $f_{1}: \fonccourte{\R^{n}}{\R}$ and $f_{2}: \fonccourte{\R^{n}}{\R}$ be strict convex differentiable functions such that the problem $\pare{f_{1},f_{2}}$ has, as Pareto set, the image of a differentiable function $\varphi : \fonccourte{ \croc{0,1} }{\R^{n}}$.\\
Assume that: (i) $f_{1}\circ \varphi$ is strictly monotone, (ii) $\dsp\lim_{t\to 0} \frac{ (f_{1}\circ \varphi)^{\prime} (t) }{t}\neq 0$ and (iii) $\dsp\lim_{t\to 1} \frac{ (f_{2}\circ \varphi)^{\prime} (t) }{1-t}\neq 0$. Then, the Pareto front is a \textbf{convex} curve, with \textbf{vertical} tangent at $t = 0$ and \text{horizontal} tangent at $t = 1$.
 \label{bigtheo}
\end{theorem}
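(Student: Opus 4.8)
The plan is to reduce everything to the two scalar functions $u \egaldef f_{1}\circ\varphi$ and $v \egaldef f_{2}\circ\varphi$, so that the Pareto front is the planar curve $t\mapsto(u(t),v(t))$. By assumption (i) the map $u$ is a strictly monotone, hence invertible, continuous map of $[0,1]$, so the front is the graph of the continuous function $g \egaldef v\circ u^{-1}$, and it remains to show $g$ is convex with the stated one-sided tangents. The single identity driving the whole argument comes from the linear scalarization theorem already used for Proposition~\ref{paretoset}: since each $\varphi(t)$ is Pareto optimal and $f_{1},f_{2}$ are strictly convex, there is a weight $s(t)\in[0,1]$ with $\varphi(t)$ minimizing $(1-s(t))f_{1}+s(t)f_{2}$, so the first-order condition reads $(1-s(t))\nabla f_{1}(\varphi(t))+s(t)\nabla f_{2}(\varphi(t))=0$. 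Taking the inner product with $\varphi'(t)$ gives the fundamental relation $(1-s(t))\,u'(t)+s(t)\,v'(t)=0$, i.e. the front slope at $t$ is $v'(t)/u'(t)=-(1-s(t))/s(t)\le 0$.

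For convexity I would avoid any regularity assumption on $s(\cdot)$ and argue by supporting lines. Fixing $t_{0}$ and writing $s_{0}=s(t_{0})$, minimality of $\varphi(t_{0})$ for the scalarization yields $(1-s_{0})u(t)+s_{0}v(t)\ge(1-s_{0})u(t_{0})+s_{0}v(t_{0})$ for every $t$. Rewritten in the variable $u$, this says that for $s_{0}\in(0,1)$ the line through $(u(t_{0}),v(t_{0}))$ of slope $-(1-s_{0})/s_{0}$ lies below the graph of $g$ and meets it at $u(t_{0})$. Since a function on an interval admitting a supporting line at every interior point is convex, $g$ is convex; continuity of $g$ then extends this to the closed interval. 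Note this step needs only the existence of $s(t_{0})$, not its continuity.

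Next I would identify the endpoints and compute the tangents, which is where (ii)–(iii) are used. Strict convexity makes $x_{1}^{\ast}\egaldef\arg\min f_{1}$ and $x_{2}^{\ast}\egaldef\arg\min f_{2}$ non-dominated, and they are the extreme points of the Pareto set; since $u$ is strictly monotone, in the orientation giving a vertical tangent at $t=0$ one has $\varphi(0)=x_{1}^{\ast}$ and $\varphi(1)=x_{2}^{\ast}$, hence $\nabla f_{1}(\varphi(0))=0$ and $\nabla f_{2}(\varphi(1))=0$, so $u'(0)=v'(1)=0$. Anti-parallelism of the gradients gives $s(t)/(1-s(t))=\|\nabla f_{1}(\varphi(t))\|/\|\nabla f_{2}(\varphi(t))\|$, which tends to $0$ as $t\to0$ and to $+\infty$ as $t\to1$; thus $s(t)\to0$ and $s(t)\to1$ at the endpoints. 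Applying l'H\^opital's rule to the secant slope $\frac{v(t)-v(0)}{u(t)-u(0)}$ (both terms vanishing) gives $\lim_{t\to0}v'(t)/u'(t)=\lim_{t\to0}-(1-s(t))/s(t)=-\infty$, a vertical tangent; symmetrically the secant slope at $t=1$ tends to $0$, a horizontal tangent.

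The main obstacle, and the precise reason (ii) and (iii) appear, is the legitimacy of these endpoint limits. Because $u'(0)=0$ while $s(t)\to0$, the relation $v'(t)=-\frac{1-s(t)}{s(t)}u'(t)$ is an indeterminate $\infty\cdot0$ form near $t=0$, and symmetrically near $t=1$; the l'H\^opital step moreover requires $u'(t)\neq0$ on a punctured neighbourhood of the endpoint. Assumption (ii), $u'(t)/t\to\ell\neq0$, furnishes exactly this at $t=0$ by forcing $u'$ to vanish to first order there, and assumption (iii) does the same for $v'$ at $t=1$, which through the fundamental relation keeps $u'(t)\neq0$ near $t=1$. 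I would therefore devote most of the care to these nondegeneracy verifications; once they are in place, the slope limits $-(1-s(t))/s(t)\to-\infty$ and $\to0$ deliver the two tangents immediately.
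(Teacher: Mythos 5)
Your proof is correct, but it reaches convexity by a genuinely different mechanism than the paper. Both arguments share the same skeleton: reduce to $u=f_1\circ\varphi$, $v=f_2\circ\varphi$, invoke linear scalarization, and contract the first-order condition with $\varphi'(t)$ to relate $u'$ and $v'$. The paper, however, identifies the weight with the parameter itself ($\varphi(t)$ minimizes $(1-t)f_1+tf_2$), obtains $(1-t)u'+tv'=0$, differentiates once more, and concludes convexity from the sign of the parametric curvature $u'v''-u''v'=u'^2/t^2>0$; the tangents are then read off by evaluating that identity at $t=0,1$ and dividing by $t$ or $1-t$. You instead keep the weight $s(t)$ abstract and derive convexity from the \emph{global} minimality in the scalarization: each Pareto point supplies a supporting line for the graph $g=v\circ u^{-1}$, and a function with a supporting line at every interior point is convex. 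This is a purely first-order argument: it needs no second derivatives of $u,v$ (which the paper's proof tacitly differentiates twice although the hypotheses only give one derivative), no regularity of $s(\cdot)$, and no assumption that $\varphi$ is the canonical scalarization parametrization — so your route is actually more faithful to the theorem as stated. What the paper's computation buys in exchange is a quantitative strict-curvature bound when the extra smoothness is available. Two small points you should make explicit: (a) that $s(t_0)\in(0,1)$ for interior $t_0$ (if $s(t_0)\in\{0,1\}$ then $\varphi(t_0)$ is a single-objective minimizer, hence an endpoint by strict monotonicity of $u$), so the supporting line is never degenerate; and (b) your supporting-line inequality already yields the endpoint tangents directly — taking $w=u(0)$ in $g(w)\ge g(w_0)-\frac{1-s_0}{s_0}(w-w_0)$ bounds the secant slope above by $-\frac{1-s_0}{s_0}\to-\infty$ — so the l'H\^opital step, while valid under (ii)--(iii), is not even needed for the vertical tangent.
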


\begin{proof}
Denote by $u \egaldef f_{1}\circ \varphi \text{ and } v \egaldef f_{2}\circ \varphi.$
  Then the Pareto front is described by the parametric equation $ \pare{u(t),v(t)}, \text{for $t \in \croc{0,1}$.}$
We will show that $  u^{\prime} v^{\prime\prime} - u^{\prime\prime} v^{\prime} > 0 $ which implies the convexity of the curve.

By linear scalarization (see~\cite{jahnvector}, or weighted sum method in~\cite{grodzevich2006normalization}), as in the proof of Proposition~\ref{paretoset}, we have 
$ \dsp (1-t) \nabla f_{1}( \varphi(t) ) + t \nabla f_{2}( \varphi(t) ) = 0.$
If we take the scalar product of the former equation with $\varphi^{\prime}(t)$, we obtain that
\ba \dsp (1-t) \scal{\nabla f_{1}( \varphi(t) )}{\varphi^{\prime}(t)} + t \scal{\nabla f_{2}( \varphi(t) )}{\varphi^{\prime}(t)} = 0. \label{zerograd}\ea
Moreover, for any differentiable function $f$ with suitable domains,
\ba \dsp \pare{f\circ \varphi}^{\prime}(t) = \diff\pare{f\circ \varphi}_{t}(1) = \diff f_{\varphi(t)}\pare{\diff\varphi_{t}(1)} = \scal{\nabla f( \varphi(t) )}{\varphi^{\prime}(t)} .\label{composition}\ea
Inserting this in \eqref{zerograd} shows 
$ \dsp (1-t)\pare{f_{1}\circ \varphi}^{\prime}(t)  + t \pare{f_{2}\circ \varphi}^{\prime}(t) = 0$,
which is the same as:
\ba \dsp (1-t)u^{\prime}(t)  + t v^{\prime}(t) = 0, \text{ for any $t\in\croc{0,1}$}.\label{derivee}\ea
Since $\dsp\lim_{t\to 0} \frac{ (f_{1}\circ \varphi)^{\prime} (t) }{t}$ exists, \eqref{derivee} implies that:
 
\ba   v^{\prime}(t) = \pare{1-\frac1t} u^{\prime}(t), \text{ for any $t\in\croc{0,1}$}.  \label{equationsecondobj}\ea
By deriving \eqref{equationsecondobj} and multiplying by $u^{\prime}(t)$ in a suitable way, we obtain
\ba   u^{\prime}(t)v^{\prime\prime}(t) = \frac{1}{t^{2}} u^{\prime}(t)^{2} + \pare{1-\frac1t} u^{\prime}(t)u^{\prime\prime}(t) , \text{ for any $t\in\croc{0,1}$}.  \label{derivsecond}\ea
Using \eqref{equationsecondobj} in \eqref{derivsecond} gives $ u^{\prime}(t)v^{\prime\prime}(t) = \frac{1}{t^{2}} u^{\prime}(t)^{2} + v^{\prime}(t)u^{\prime\prime}(t).$
Thanks to the assertions on $f_{1}\circ \varphi$, we have that
$ u^{\prime}(t) v^{\prime\prime}(t) - u^{\prime\prime}(t) v^{\prime}(t) > \frac{1}{t^{2}} u^{\prime}(t)^{2} > 0, \text{ for any $t\in\croc{0,1}$}.$
 Thus, the Pareto front is a \textbf{convex} curve.\\
 Evaluating \eqref{derivee} at $t = 0$ and at $t = 1$ implies that $u^{\prime}(0) = 0, v^{\prime}(1) = 0.$
 And if we divide \eqref{derivee} by $t$ (resp. $1-t$) and take the limit to $0$ (resp. $1$), it follows that
$v^{\prime}(0) \neq 0$ (resp.\ $u^{\prime}(1) \neq 0$).
Thereby we also obtain the derivative assumptions on the extremal points.
 \qed
\end{proof} 
\begin{remark}
Note that the above result about the tangents in the extremal points have additional consequences: according to \cite{auger2009theory}, the assumptions of Theorem~\ref{bigtheo} imply that the extremal points are never included in any optimal $\mu$-distributions of the Hypervolume indicator.
\end{remark}
We now deduce the convexity of the Pareto front for convex-quadratic bi-objective problems and characterize the derivatives at the extremes of the front.
\begin{corollary}
For the problem $\mathcal{P}$, the Pareto front is a \textbf{convex} curve, with \textbf{vertical} tangent at $\pare{0, f_{2}(x_{1})}$ and \textbf{horizontal} tangent at $\pare{f_{1}(x_{2}),0}$. 
\end{corollary}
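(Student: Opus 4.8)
The plan is to obtain this corollary as a direct application of Theorem~\ref{bigtheo} to the problem $\mathcal{P}$. The functions $f_1$ and $f_2$ are strictly convex (their Hessians $\frac{2}{\alpha}Q_1$ and $\frac{2}{\beta}Q_2$ are positive definite) and differentiable, and by Proposition~\ref{paretoset} the Pareto set is the image of the differentiable map $\varphi$. Hence it only remains to verify the three assumptions (i)--(iii) of the theorem.

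For the monotonicity assumption (i), I would express $u \egaldef f_1\circ\varphi$ using the relations from Proposition~\ref{paretoset}. Since $\nabla f_1(x) = \frac{2}{\alpha}Q_1(x-x_1)$, equation~\eqref{composition} gives $u'(t) = \frac{2}{\alpha}\scal{Q_1(\varphi(t)-x_1)}{\varphi'(t)}$, and substituting \eqref{derivephi} yields $u'(t) = \frac{2t}{\alpha}\scal{\croc{(1-t)Q_1+tQ_2}\varphi'(t)}{\varphi'(t)}$. Because $(1-t)Q_1+tQ_2$ is positive definite on $\croc{0,1}$, this quantity is nonnegative and vanishes only when $\varphi'(t)=0$; moreover \eqref{eq:ParetoSet} shows that $\varphi(t)=x_1$ would force $x_2=x_1$, so $\varphi(t)\neq x_1$ and hence $\varphi'(t)\neq 0$ for $t\in(0,1)$. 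Thus $u'>0$ on $(0,1)$ and $u=f_1\circ\varphi$ is strictly increasing.

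The delicate point, and the main obstacle, is establishing (ii) and (iii), because the above formula gives $u'(t)/t = \frac{2}{\alpha}\scal{\croc{(1-t)Q_1+tQ_2}\varphi'(t)}{\varphi'(t)}$, whose limit at $t=0$ equals $\frac{2}{\alpha}\scal{Q_1\varphi'(0)}{\varphi'(0)}$, which is nonzero precisely when $\varphi'(0)\neq 0$. The expression $\varphi'(t) = \frac1t\croc{(1-t)Q_1+tQ_2}^{-1}Q_1(\varphi(t)-x_1)$ is of indeterminate form $0/0$ at $t=0$, so I would instead divide \eqref{eq:ParetoSet} by $t$ and pass to the limit, using $\varphi(0)=x_1$ and $(\varphi(t)-x_1)/t\to\varphi'(0)$, to obtain $Q_1\varphi'(0)=Q_2(x_2-x_1)$, i.e.\ $\varphi'(0)=Q_1^{-1}Q_2(x_2-x_1)\neq 0$ since $x_1\neq x_2$. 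The symmetric computation (dividing \eqref{eq:ParetoSet} by $1-t$ and letting $t\to 1$) gives $\varphi'(1)=Q_2^{-1}Q_1(x_2-x_1)\neq 0$, and since a parallel manipulation yields $v'(t)/(1-t) = -\frac{2}{\beta}\scal{\croc{(1-t)Q_1+tQ_2}\varphi'(t)}{\varphi'(t)}$ for $v\egaldef f_2\circ\varphi$, assumption (iii) follows as well.

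With (i)--(iii) verified, Theorem~\ref{bigtheo} yields that the front is convex with vertical tangent at $t=0$ and horizontal tangent at $t=1$. Finally I would identify the endpoints: since $\varphi(0)=x_1$ and $\varphi(1)=x_2$ we have $f_1(x_1)=0$ and $f_2(x_2)=0$, so the extremal points of the front are $\pare{0,f_2(x_1)}$ and $\pare{f_1(x_2),0}$, which completes the proof.
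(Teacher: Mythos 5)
Your proposal is correct and follows essentially the same route as the paper: both verify hypotheses (i)--(iii) of Theorem~\ref{bigtheo} using the identities of Proposition~\ref{paretoset}, rule out $\varphi(t)=x_{1}$ by the same contradiction via \eqref{eq:ParetoSet}, and compute the limits at $t=0$ and $t=1$ explicitly, obtaining the same values $\frac{2}{\alpha}\scal{Q_{1}^{-1}Q_{2}(x_{2}-x_{1})}{Q_{2}(x_{2}-x_{1})}$ and its analogue at $t=1$. The only difference is cosmetic: you express the quadratic forms in terms of $\varphi'(t)$ and the matrix $(1-t)Q_{1}+tQ_{2}$, whereas the paper expresses them in terms of $Q_{1}(\varphi(t)-x_{1})$ and the inverse matrix $\croc{(1-t)Q_{1}+tQ_{2}}^{-1}$.
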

\begin{proof}
We will show that $ f_{1}\circ\varphi$ verifies the assumptions of Theorem~\ref{bigtheo}. From~\eqref{composition} we know that
\ba \dsp \pare{f_{1}\circ \varphi}^{\prime}(t) =  \scal{\nabla f_{1}( \varphi(t) )}{\varphi^{\prime}(t)} .\label{composition1} \ea
In addition, $ \nabla f_{1}( \varphi(t) ) = \frac{2}{\alpha} Q_{1}\pare{ \varphi(t)-x_{1} } $
and Eq.~\eqref{derivephi} of Proposition~\ref{paretoset} gives
$ t\croc{ (1-t) Q_{1} + t Q_{2} } \varphi^{\prime}(t) = Q_{1}\pare{ \varphi(t)-x_{1} }.$
Multiplying \eqref{composition1} by $t\in \croc{0,1}$ shows
\ba
 \dsp t \pare{f_{1}\circ \varphi}^{\prime}(t) = \frac{2}{\alpha} \scal{ \croc{ (1-t) Q_{1} + t Q_{2} }^{-1}Q_{1}\pare{ \varphi(t)-x_{1}} }{ Q_{1}\pare{ \varphi(t)-x_{1} } }.
\label{positivity}
\ea
Since $\croc{ (1-t) Q_{1} + t Q_{2} }^{-1}$ is a positive definite matrix, then 
$t \pare{f_{1}\circ \varphi}^{\prime}(t) \geq 0 . $
Let us prove that $\varphi(t) \neq x_{1}$, for $t \in (0,1]$. By contradiction, assume that there exists $t \in (0,1]$ such that $\varphi(t) = x_{1}$.
Then Equation~\eqref{eq:ParetoSet} in Proposition~\ref{paretoset} shows that:
$\dsp tQ_{2}\pare{ x_{2} - \varphi(t) } = (1-t)Q_{1}\pare{ \varphi(t)-x_{1} } = 0$,
which implies that $x_{2} = \varphi(t) = x_{1} $: that is impossible since $x_{1}\neq x_{2}$.
Hence, by \textit{reductio ad absurdum}, $\varphi(t) \neq x_{1}$, for $t \in (0,1].$
From~\eqref{positivity}, it follows that
\ba\pare{f_{1}\circ \varphi}^{\prime}(t) > 0 , \text{ for any $t \in (0,1].$} \label{firstpart}\ea
If we use again the relation from Proposition~\ref{paretoset}, we obtain
$\dsp \lim_{t\to 0} \frac{Q_{1}\pare{ \varphi(t)-x_{1} }}{t} = Q_{2}\pare{ x_{2} - \varphi(0) } = Q_{2}\pare{ x_{2} - x_{1} }.$
Injecting this result in~\eqref{positivity}, it follows that:
\ba \dsp\lim_{t\to 0} \frac{ (f_{1}\circ \varphi)^{\prime} (t) }{t} &=& 
\frac{2}{\alpha} \scal{ Q_{1}^{-1}Q_{2}\pare{ x_{2}-x_{1}} }{ Q_{2}\pare{ x_{2}-x_{1} }} \nonumber\\
		 &>& 0, \text{ since ($Q_{1}^{-1}$ is a positive definite matrix) }
\label{secondpart2}\ea
In the same way as above, we obtain that
\ba \dsp\lim_{t\to 1} \frac{ (f_{2}\circ \varphi)^{\prime} (t) }{1-t} = 
-\frac{2}{\beta} \scal{ Q_{2}^{-1}Q_{1}\pare{ x_{1}-x_{2}} }{ Q_{1}\pare{ x_{1}-x_{2} }} < 0 \enspace.
\label{secondobjpart}\ea
Equations \eqref{firstpart}, \eqref{secondpart2}, and \eqref{secondobjpart} allow us to apply Theorem~\ref{bigtheo}.
\qed
\end{proof}
We illustrate the previous corollary by taking three % \todo{in the figure below, there are only two?} 
random instances of our general problem $\mathcal{P}$, with the scalings always chosen as
$\dsp \alpha = \beta = \max\pare{f_{1}(x_{2}), f_{2}(x_{1}) }$.
The Pareto fronts are presented in Figure~\ref{generalfront}. We observe that the Pareto fronts are convex and their derivatives are infinite on the left and zero on the right.
\begin{figure*}[h!]
 %   \centering
 %   \begin{subfigure}[t]{0.50\textwidth}
  %      \centering
        \includegraphics[width=0.52\textwidth]{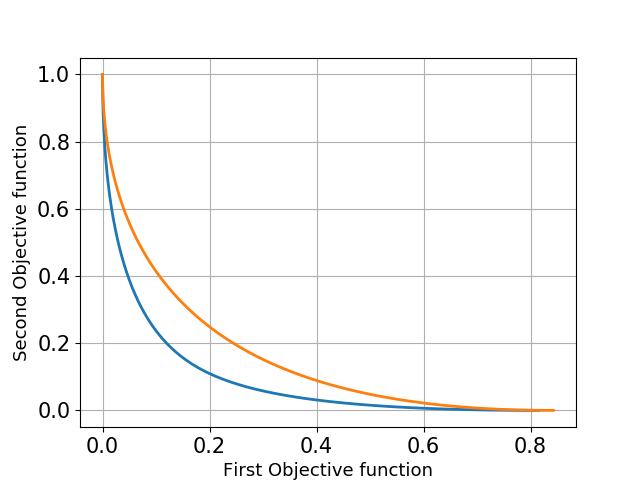}
  %      \caption{ Pareto set of $\mathcal{P}_{2}$ and $\mathcal{P}_{2}^{\prime}$ } \label{paretosetdim2}
 %   \end{subfigure}%
 %   ~ 
 %       \begin{subfigure}[t]{0.5\textwidth}
 %       \centering
 \hspace{-0.5cm}
        \includegraphics[width=0.52\textwidth]{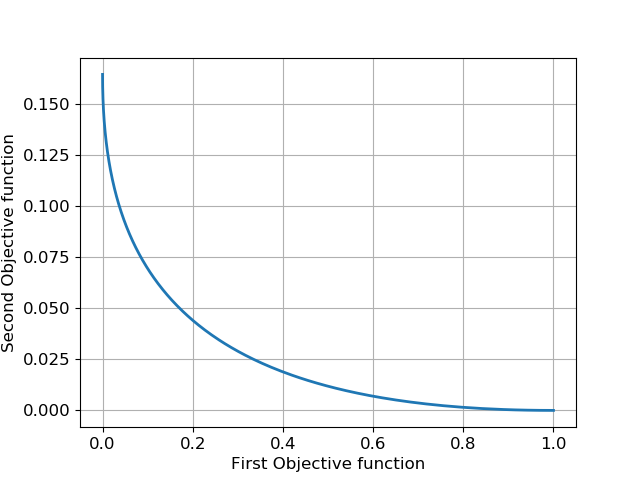}
    %    \caption{ Pareto set of $\mathcal{P}_{10}$.} \label{generalset}
 %   \end{subfigure}
    \caption{\label{generalfront} Left: Two Pareto fronts for $n=2$ represented in $\R^2$ with $Q_1, Q_2$ randomly sampled and different. Right: Pareto front for $n=10$ with matrices given in \eqref{eq:Qdim10}. }
\end{figure*}

\section{New Classes of  bi-objective test functions}
\label{classsection}

Bi-objective problems using convex-quadratic functions have been used to test MO algorithms (see for example~\cite{ihr2007a}). Problems where both Hessian matrices have the same eigenvalues have been used in particular.
Yet, test problems considered so far do not explore the full possibilities of properties that can be tested. We therefore extend the test problems from the literature to be able to capture more properties. To do so we present seven classes of bi-objective convex-quadratic problems where the eigenspectra of both Hessian matrices are equal.
A natural extension of these classes is to use in each objective different eigenspectra, $\Delta$, which leads in general to a nonlinear Pareto set.

The proposed construction parametrizes, apart from search space translations, \emph{all} bi-objective convex-quadratic functions with identical Hessian eigenspectrum in seven classes with increasing difficulty. The particular focus is on problems with a linear Pareto set in five of the seven classes.
Some classes represent essentially different problems, hence we do not expect uniform performance over all problems within each class.
Independently of the given construction, invariance to search space rotation can be tested by applying an orthogonal transformation to the input argument.

We start from a diagonal matrix $\Delta$ with positive entries that define a separable convex-quadratic function $f(x)= \frac{1}{\alpha} x^\top \Delta x$. For instance, $\Delta$ can be equal to the identity and we recover the sphere function.
If $ \Delta(1,1) = 1, \, \Delta(n,n) = 10^8$ and $\Delta(i,i)=10^4$, we recover the separable cig-tab function and if $\Delta(i,i) = 10^{6 \frac{i-1}{n-1}}$, we recover the separable ellipsoid function.

In the sequel, $O$ and $O_2$ denote orthogonal matrices. $O_{1}$ is either a permutation matrix, or an orthogonal matrix, depending on the context. The classes of problems proposed are  summarized in Table~\ref{tab1} and Table~\ref{tab2}.

\subsubsection*{The Sep problem classes}
We define the \textbf{Sep-$k$} class by considering two separable functions and place the optimum of $f_1$ in $0$ and of $f_2$ in the $k^{\rm th}$ unit vector: $f_{1,\Delta}^{ \text{\textbf{sep-$k$}}}(x) =  \frac{1}{\alpha} \pare{x-x_{1}}^\top \Delta \pare{x-x_{1}}$ and $f_{2,\Delta}^{ \text{\textbf{sep-$k$}}}(x) = \frac{1}{\beta} \pare{x-x_{2}}^\top \Delta \pare{x-x_{2}}$, where  $x_{1} = (0,\dots,0)^\top$ and $x_{2} = (0,\dots,0,\sqrt{n},0,\dots,0)^\top $ where $\sqrt{n}$ is at coordinate $k$. According to Corollary~\ref{segmentset}, the Pareto set of this class of problems is the line segment between the optima of the single-objective problems.
These problems allow to test the performance on separable problems with a Pareto set aligned with the coordinate axis and check the sensibility with respect to different axes (by varying $k$).

For the \textbf{Sep-O} class, we only change the location of the optimum of the second objective by taking $x_2 = O(1,\dots,1)^\top$. If $O$ has elements $\not\in\{-1,0,1\}$, the Pareto set is not anymore aligned with the coordinate system, but the objectives $f_1$ and $f_2$ themselves remain separable.
Comparing with class \textbf{Sep-$k$}, we can test whether having the Pareto set not aligned with the coordinate axis has an influence on the performance of the algorithm.

For the \textbf{Sep-Two-O} class, we define $f_{1,\Delta}^{ \text{\textbf{sep-Two-O}}}(x) = \frac{1}{\alpha} \pare{x-x_{1}}^\top \Delta \pare{x-x_{1}}$ and $f_{2,\Delta}^{ \text{\textbf{sep-Two-O}}}(x) = \frac{1}{\beta} \pare{x-x_{2}}^\top O_{1}^\top \Delta O_{1} \pare{x-x_{2}}$ where $O_{1}$ is a permutation matrix, $x_{1} = (0,\dots,0)^\top$ and $x_{2} = O(1,\dots,1)^\top$. The matrix $O_{1}^\top \Delta O_{1}$ is also diagonal, and thereby each function is separable. Yet the Pareto set is generally not a line segment anymore since we have different Hessian matrices. We can test here the difficulty of having a nonlinear Pareto set on separable functions.

\subsubsection*{The \textbf{One} and the \textbf{One-O} problem classes.}
We now consider non-separable problems with a line segment as Pareto set. We define
 $ f_{1,\Delta}^{\text{\textbf{one}}}(x) =  \frac{1}{\alpha} \pare{ x-x_{1}}^\top O_{1}^\top$ $\Delta O_{1} \pare{x-x_{1}} $ and $
  f_{2,\Delta}^{\text{\textbf{one}}}(x) = \frac{1}{\alpha} \pare{x-x_{2}}^\top O_{1}^\top \Delta O_{1}\pare{x-x_{2}}$,
where $O_{1}$ is an orthogonal matrix, $x_{1} = (0,\dots,0)^\top$ and $x_{2} = (1,\dots,1)^\top$. We replace $x_{2}$ by $O x_{2}$ to obtain the \textbf{One-O} problems.

These two problem classes allow to test the performance on non-separable problems that have a line segment as Pareto set comparing in particular to class \textbf{Sep-O}.
Up to a reformulation, the problems ELLI1 and CIGTAB1 from \cite{ihr2007a}  are from the \textbf{One-O} problem class.
Generally, we do not expect different performance over all problems of the \textbf{One} vs the \textbf{One-O} class.

\subsubsection*{The \textbf{Two} and the  \textbf{Two-O} problem classes.}
For these classes, we rotate each function independently; then the Pareto set is generally not a line segment anymore. We define
$ f_{1,\Delta}^{\text{\textbf{two}}}(x) =  \frac{1}{\alpha} \pare{ x-x_{1}}^\top O_{1}^\top\Delta O_{1} \pare{x-x_{1}} $ and $  f_{2,\Delta}^{\text{\textbf{two}}}(x) = \frac{1}{\alpha} \pare{x-x_{2}}^\top O_{2}^\top \Delta$ $O_{2}\pare{x-x_{2}}$, with $O_{1}$ orthogoanal, $x_{1} = (0,\dots,0)^\top$ and $x_{2} = (1,\dots,1)^\top$. The corresponding \textbf{O} problems are obtained with $O x_{2}$ replacing $x_{2}$.
All presented classes are subsets of the \textbf{Two-O} class.
ELLI2 and CIGTAB2 from \del{the paper~}\cite{ihr2007a} fall within the \textbf{Two-O} class.
Compared to the respective \textbf{One} classes, we can test the impact of having a nonlinear Pareto set.

\newcolumntype{R}[1]{>{\raggedleft\arraybackslash }b{#1}}
\newcolumntype{L}[1]{>{\raggedright\arraybackslash }b{#1}}
\newcolumntype{C}[1]{>{\centering\arraybackslash }b{#1}}
\pagestyle{empty}

\setlength{\tabcolsep}{0mm}
\newcolumntype{M}{>{\centering\arraybackslash}m{\dimexpr.30\linewidth-2\tabcolsep}}
\newcolumntype{L}{>{\centering\arraybackslash}m{\dimexpr.1\linewidth-2\tabcolsep}}

\begin{table}

\centering
\caption{ Unconstrained quadratic bi-objective test problems: $\dsp\Delta$ is a positive diagonal matrix, $O$ is an orthogonal matrix, $O_{1}$ is a permutation matrix.
}\label{tab1}
\begin{tabular}{  LMMM }
		\toprule
		      & \textbf{Sep-$k$} & \textbf{Sep-O} &  \textbf{Sep-Two-O} \\
		\midrule
		$x_1$ & $(0,\dots,0)^\top$ & $(0,\dots,0)^\top$ & $(0,\dots,0)^\top$ \\
		$x_2$ & $\underbrace{\pare{0,..,\sqrt{n},..,0}^\top }_{ \text{$\sqrt{n}$ is at row $k$} }$ &  $ O(1,\dots,1)^\top$ & $ O(1,\dots,1)^\top$  \\
		$Q_1, Q_2$ & 
		  $\Delta$, $\Delta$ & 
		  $\Delta$, $\Delta$ & 
		  $\Delta$, $O_{1}^\top \Delta O_{1}$ \\

		\begin{sideways}{Level sets}\end{sideways} & \includegraphics[scale=0.25]{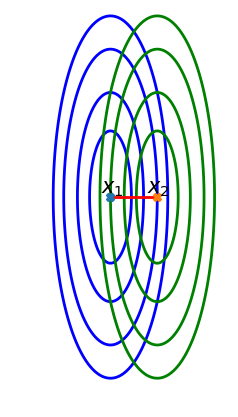} & \includegraphics[scale=0.25]{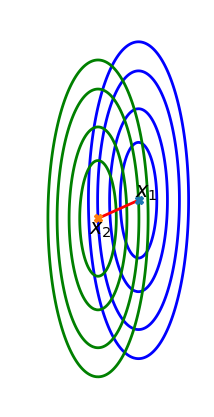} &  \includegraphics[scale=0.25]{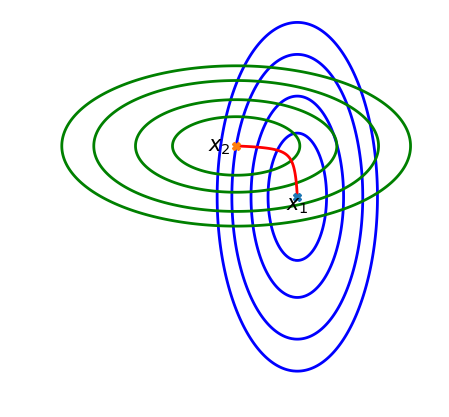}\\
		\bottomrule

\end{tabular}
\end{table}

\setlength{\tabcolsep}{0mm}
\newcolumntype{M}{>{\centering\arraybackslash}m{\dimexpr.224\linewidth-2\tabcolsep}}
\newcolumntype{L}{>{\centering\arraybackslash}m{\dimexpr.1\linewidth-2\tabcolsep}}

\begin{table}
	\centering
	\caption{ Unconstrained quadratic bi-objective test problems: $\dsp\Delta$ is a positive diagonal matrix, $O$, $O_{1}$ and $O_{2}$ are three independent orthogonal matrices.	
		}\label{tab2}
  \begin{tabular}{  LMMMM }
		\toprule
		      & \textbf{One} & \textbf{One-O} &  \textbf{Two} &  \textbf{Two-O} \\
		\midrule
		$x_1$ & $(0,\dots,0)^\top$ & $(0,\dots,0)^\top$ & $(0,\dots,0)^\top$ & $(0,\dots,0)^\top$ \\
		$x_2$ & $ (1,\dots,1)^\top$  & $ O(1,\dots,1)^\top$ & $ (1,\dots,1)^\top$  & $ O(1,\dots,1)^\top$  \\
		$Q_1, Q_2$ & 
		  $O_1^\top \Delta O_1$, $O_1^\top \Delta O_1$& $O_1^\top \Delta O_1$, $O_1^\top \Delta O_1$ & $O_1^\top \Delta O_1$, $O_2^\top \Delta O_2$ & $O_1^\top \Delta O_1$, $O_2^\top \Delta O_2$ \\
		\begin{sideways}{Level sets}\end{sideways} & 
		\hspace*{-1em}\includegraphics[scale=0.2]{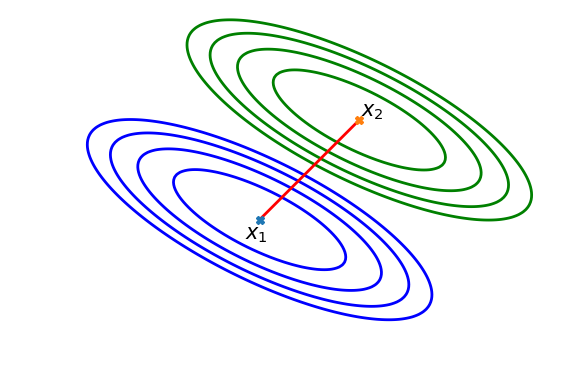} & \includegraphics[scale=0.2]{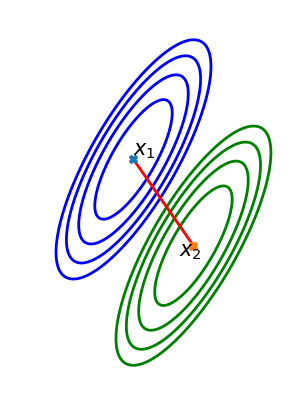} &  \includegraphics[scale=0.2]{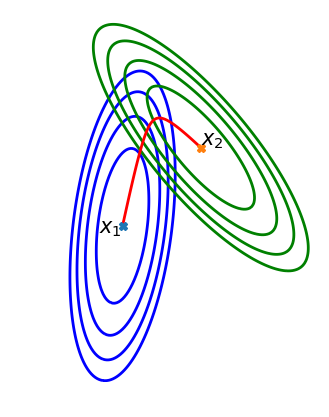}&  \includegraphics[scale=0.2]{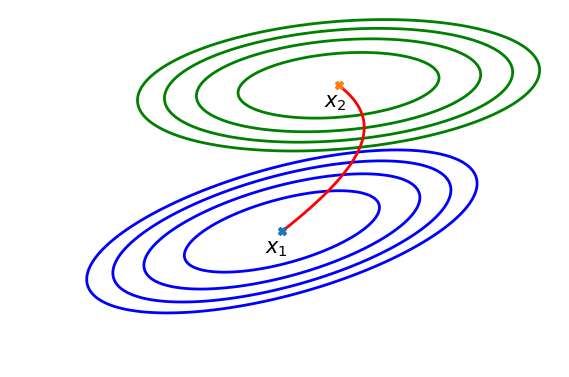}\\
		\bottomrule
\end{tabular}
\end{table}

\section{Summary}
We have presented an analytic description of the Pareto set for quadratic bi-objective problems.
We have shown that the Pareto set is a line segment when both objectives have proportional Hessian matrices and deduced a complete description of the Pareto front in that case.
We have also proven that some properties of the double-sphere are conserved in a wider framework that includes the general quadratic bi-objective problem:
the Pareto front remains convex and its vertical and horizontal tangents remain at the extremal points of the front.
Such assumptions on the derivatives imply that when looking at the optimal $\mu$-distributions of the Hypervolume indicator, the extremal points are always excluded \cite{auger2009theory}.
We have also presented several classes of problems, where each one tests a specific capability of the multiobjective algorithm.
\subsubsection*{Acknowledgments}
The Ph.D. of Cheikh Tour\'e is funded by Inria and Storengy. We particularly thank F. Huguet and A. Lange from Storengy for their strong support, practical ideas and expertise.

 \bibliographystyle{splncs04}
\bibliography{emo,allDimo}

\end{document}